\DeclareMathOperator*{\argmax}{\arg\max}
\DeclareMathOperator*{\argmin}{\arg\min}
\newtheorem{defintion}{Definition}
\newtheorem{lemma}{Lemma}
\begin{document}
	\title{Projected Block Coordinate Descent for sparse spike estimation}
	\author{\IEEEauthorblockN{Pierre-Jean Bénard\textsuperscript{1,*}, Yann Traonmilin\textsuperscript{1}, Jean-François Aujol\textsuperscript{1}
			\\
			\textit{\textsuperscript{1}Univ. Bordeaux, Bordeaux INP, CNRS,  IMB, UMR 5251,F-33400 Talence, France}}
		\textsuperscript{*}contact: pierre-jean.benard@math.u-bordeaux.fr
	}

	\maketitle

	\begin{abstract}
		We consider the problem of recovering off-the-grid spikes from linear measurements. The state of the art Over-Parametrized Continuous Orthogonal Matching Pursuit (OP-COMP) with Projected Gradient Descent (PGD) successfully recovers those signals. In most cases, the main computational cost lies in a unique global descent on all parameters (positions and amplitudes). In this paper, we propose to  improve this algorithm by accelerating this descent step.
		We introduce a new algorithm, based on  Block Coordinate Descent, that takes advantages of the sparse structure of the problem. Based on qualitative theoretical results, this algorithm shows improvement in calculation times in realistic synthetic microscopy experiments.
	\end{abstract}
    \begin{IEEEkeywords}
        spike super-resolution, non-convex optimization, over-parametrization, block-coordinate descent
    \end{IEEEkeywords}

	\section{Introduction}
	The  off-the-grid super-resolution problem consists in recovering spikes from linear measurements in a continuous setting. It has applications in many fields. Among these applications, one can cite Single Molecule Localization Microscopy (SMLM) or Magnetic Resonance Imaging (MRI). In these fields, the signals to recover display properties of sparsity that can be exploited for a faster recovery.

	Let $ x_{0} $ be an off-the-grid sparse signal over $ \mathbb{R}^{d} $. Such signals can be modeled as a sum of $ K $ Dirac measures:
	\begin{equation}\label{eq:x0}
		x_{0} = \sum_{i=1}^{K} a_{i} \delta_{t_{i}}
	\end{equation}
	where $ a = (a_{1}, \dots, a_{K}) \in \mathbb{R}^{K} $ are the amplitudes and $ t = (t_{1}, \dots, t_{K}) \in  \mathbb{R}^{K \times d} $ are the positions of the spikes.
	We observe the signal through a linear operator $ A $ from the space $ \mathcal{M} $ of finite signed measures over $ \mathbb{R}^{d} $ to $ \mathbb{C}^{m} $ with $ m $ the number of measurements. The result of this observation is $ y \in \mathbb{C}^{m} $ which can be written as $ y = A x_{0} $.
	Note that we consider the noiseless case in this article for the sake of clarity.

	A way to recover the true signal $ x_{0} $ from its observation is to find the minimizer of a non-convex least-square problem:
	\begin{equation}\label{eq:func_to_minimize}
		x^{*} \in \argmin_{x \in \Sigma_{K, \epsilon}} \| A x - y \|_{2}^{2}
	\end{equation}
	where $\Sigma_{K, \epsilon}$, defined in \eqref{eq:set_separation_constraint}, is a set modeling a separation constraint between spikes.
	We consider the set of parameters $\Theta_{K, \epsilon}$,
	\begin{multline}
		\Theta_{K, \epsilon} = \left\{ \theta = (a_{1}, \dots, a_{K}, t_{1}, \dots, t_{K}) \in \mathbb{R}^{K(d + 1)}, \right. \\
		\left. \forall i, j \in \{ 1, \dots, K \}, i \neq j, \| t_{i} - t_{j} \|_{2} > \epsilon \right\}.
	\end{multline}
	We can rewrite \eqref{eq:x0} with the variable $\theta$:
	\begin{equation}
		x_{0} = \phi(\theta) := \sum_{i = 1}^{K} a_{i} \delta_{t_{i}} \in \Sigma_{K, \epsilon}.
	\end{equation}
	The unknown $x_{0}$ belongs to the low dimensional model
	\begin{equation}\label{eq:set_separation_constraint}
		\Sigma_{K, \epsilon} = \phi(\Theta_{K, \epsilon})
	\end{equation}
	The problem \eqref{eq:func_to_minimize} can then be written as
	\begin{equation} \label{eq_g}
		\theta^{*} \in \argmin_{\theta \in \Theta_{K, \epsilon}} g(\theta) \quad \text{with} \quad g(\theta) := \| A \phi(\theta) - y \|_{2}^{2}.
	\end{equation}
	Theoretical guarantees for the recovery of $x_{0}$ with \eqref{eq:func_to_minimize} have been given in \cite{gribonval2021compressive}. As an example, when the linear operator $A$ models  random Fourier measurements at frequencies drawn with a Gaussian distribution and $\dim(y) \gtrapprox \mathcal{O}(K^{2} d \text{polylog}(K, d))$, we have $x^{*} = x_{0}$.
	In practice, Sliding Continuous Orthogonal Matching Pursuit (SCOMP) \cite{keriven2017compressive} has been successful at minimizing \eqref{eq:func_to_minimize}. Recovery guarantees are given in \cite{elvira2019omp, elvira2019does,benard2023estimation}. We can also cite the  Sliding Frank-Wolfe (SFW) algorithm \cite{denoyelle:hal-01921604:sliding-frank-wolfe}, that aims to solve a regularized version of \eqref{eq:func_to_minimize}.
	Although state-of-the-art methods can fully recover signals in this setting, they lack of performance in computation time. The Over-Parametrized COMP + Projected Gradient Descent (OP-COMP + PGD) algorithm has been introduced to address these performance issues~\cite{traonmilin:projected_gradient_descent,benard2022fast}. This method  initializes an over-parametrized signal without a costly descent step (OP-COMP). Then, it performs a single descent on all parameters while performing a  projection to reduce the number of spikes (PGD).
	It has been proven that a good initialization of the signal is possible in $ K $ steps \cite{benard2023estimation}. This means that each initialized spike is  close to a true spike, on the conditions that one has a sufficient separation $ \epsilon $ and a well-behaved operator $ A $. However, these conditions are not always verified. In these cases, while OP-COMP + PGD is still faster than SFW and SCOMP, it lacks in efficiency during the descent on all parameters in $ \theta $. The large number of gradients to compute is the main limiting factor of OP-COMP + PGD~\cite{benard2023estimation}.

	For gradients descent methods, an idea to accelerate computation time is the Block Coordinate descent (BCD). It consists in only performing iterations of the descent on a subset of the coordinates of the  parameters that we estimate.

	The idea of an underlying structure to separate the signal in blocks is  common in many applications (see e.g. \cite{shi2016primer, wright2015coordinate}). Among many applications, there are Non-Negative Matrix Factorization \cite{xu2013block}, Sparse Logistic Regression \cite{wu2008coordinate}. The LASSO problem has also been solved with BCD methods \cite{li2009coordinate, wright2015coordinate}.

	The definition of Block Coordinate Descent relies on the blocking strategy, the block selection rule and the block update rule \cite{nutini2022lets}.

	\noindent \textit{Blocking strategy.} The blocks can be either fixed beforehand or variable and recomputed at each iteration. Fixed blocks permit better control and separation in sub-problems \cite{sardy2000block} when the structure of the problem is known before computations \cite{meier2008group}. Variable blocks allow more freedom, e.g. by creating blocks based on greedy methods \cite{tseng2009coordinate} such as Gauss-Southwell.

	\noindent \textit{Block selection rule.}
	As the choice of the block to update may lead to different minima, it is a main area of focus for BCD methods \cite{shi2016primer}. Basic ideas such as the cyclic variants for index choice are common as they are deterministic methods with proven convergence \cite{beck2013convergence}. Implementation-wise, they are simple and do not require computation overhead.
	Instead of going through all coordinates in a cyclic order, one can use randomized methods by choosing coordinates based on a uniform distribution \cite{nesterov2012efficiency}, weighted distribution \cite{allen2016even} or just from arbitrary rules \cite{richtarik2014iteration}.
	Greedy methods are also used in the block selection process. They choose the most suitable block at each iteration by solving an optimization problem \cite{nutini2022lets}. A well known greedy method, for its efficiency both theoretically and practically and its simplicity, is the Gauss-Southwell (GS) rule which select a block based on the norm of its gradient,
	\begin{equation}\label{eq:gauss-southwell}
		b_{k} \in \argmax_{b \in \mathcal{B}} \| \left[ \nabla g(\theta^{k}) \right]_{b} \|_{2}.
	\end{equation}
	where $ \mathcal{B} $ is the set of all blocks. Other greedy methods such as Maximum Improvement \cite{chen2012maximum} or GS-Lipschitz \cite{nutini2015coordinate} are also valid choices for block selection rules.
	Greedy methods perform better than cyclic or random methods but at the cost of a more expensive computation \cite{nutini2015coordinate}.

	\noindent \textit{Block update rule.} The role of this rule is to make the chosen block maximally decrease the objective function (like \eqref{eq:func_to_minimize}). It is important to use a block update rule suited with the problem. Starting with the classical gradient descent, one can set the step-size in different ways (fixed, approximate line-search or optimal step-size) to improve performance \cite{bertsekas1997nonlinear}. Many more descent methods used in convex and non-convex optimization are used in BCD. Among the many methods introduced, one can use proximal update \cite{auslender1992asymptotic}, stochastic gradient \cite{nemirovski2009robust} depending, for example, on the geometry of the objective function.
	In our case, we wish to improve upon the Projected Gradient Descent from OP-COMP + PGD using a FISTA Restart method for the block updating rule. An early use of FISTA for BCD method can be found in \cite{qin2013efficient}. While there are no theoretical guarantees for the convergence of FISTA in BCD methods, it works well in various cases \cite{chambolle2015convergence}.


	\noindent\textbf{Contributions.}
	In this paper, our main contribution is an accelerated method for recovering sparse signals based on OP-COMP + PGD.
	We propose a Projected Block Coordinate Descent method that performs the descent step on spikes that have not already converged. The main ingredient for this BCD is a block selection strategy based on the specific structure of the sparse spike model to decompose the problem in small blocks. We define the three ingredients defining  BCD within the OP-COMP + PGD method and justify qualitatively our choices.
	We provide synthetic experiments in the context of microscopy that show an improvement up to $ 35 \% $ in calculation time compared to PGD.

	\section{The Projected Block Coordinate Descent algorithm}

	In this section, we motivate the idea of using a block system to divide our problem in a sum of smaller and simpler problems. Moreover, we describe our method: the projected Block Coordinate Descent. This descent only operates on parameters of non-converged spikes of the estimated signal.

	\subsection{A result to guide block selection}

	The Block Coordinate Descent (BCD) is a way to fasten the computation time of descent algorithms by using the underlying structure of the problem. In our case, the more spikes there are in the signal to recover, the more parameters there are to compute in the descent process.  As our model contains a separation constraint, every spike to be recovered is estimated by one or more initialized spikes independently of the other true spikes. This leads to two cases: either there is one initialized spike estimating a true spike, or a cluster of initialized spikes. For one spike estimations, the convergence to their associated true spike is fast. On the contrary for the clusters of initialized spikes, their convergence is much slower toward a true spike~\cite{benard2023estimation}. We propose to only compute the gradients of spikes that have not yet converged. Next, we show qualitatively why we can use our model to separate the recovery of each spike using our separation constraint with an idealized initialized signal.

	Let $x_{0} = \sum_{i = 1}^{K} a_{i} \delta_{t_{i}} \in \Sigma_{K, \epsilon}$ the signal to recover and $x = \sum_{j = 1}^{K} b_{j} \delta_{s_{j}}$ the initialized signal obtained with COMP. Suppose that $x$ is well initialized i.e. for all $i \in \{ 1, \dots, K \}, \| t_{i} - s_{i} \|_{2} < \frac{\epsilon}{3}$.
	By computing their difference, we get
	\begin{equation}
		x_{0} - x = \sum_{i = 1}^{K} \left( a_{i} \delta_{t_{i}} - b_{i} \delta_{s_{i}} \right) = \sum_{i = 1}^{K} \nu_{i}
	\end{equation}
	where $\nu_{i}= a_{i} \delta_{t_{i}} - b_{i} \delta_{s_{i}} $ is called a $\frac{\epsilon}{3}$-dipole. From \cite{benard2023estimation}, we recall the definition of a $\frac{\epsilon}{3}$-dipole,
	\begin{defintion}[($\epsilon$-)Dipole]
		An $\epsilon$-dipole is a measure $ \nu = a \delta_{t} - b \delta_{s} $ where $ \| t - s \|_{2} \leq \epsilon $.
	\end{defintion}
	\noindent Let us also recall the definition of $ \frac{\epsilon}{3} $-separated dipoles,
	\begin{defintion}[($\epsilon$)-Separation of dipoles]
		Two dipoles $ \nu_{1} = a_{1} \delta_{t_{1}} - b_{1} \delta_{s_{1}} $ and $ \nu_{2} = a_{2} \delta_{t_{2}} - b_{2} \delta_{s_{2}} $ are $ \epsilon $-separated if their support is strictly $ \epsilon $-separated, i.e. if $ \| t_{1} - t_{2} \|_{2} > \epsilon $ and $ \| t_{1} - s_{2} \|_{2} > \epsilon $ and $ \| s_{1} - t_{2} \|_{2} > \epsilon $ and $ \| s_{1} - s_{2} \|_{2} > \epsilon $.
	\end{defintion}
	\noindent The following lemma gives us bounds to better interpret the block structure of our model with our initialization $ x $.
	\begin{lemma}\label{lem:bounds_y-Ax}
		Let $ x_{0} = \sum_{i = 1}^{K} a_{i} \delta_{t_{i}} \in \Sigma_{K, \epsilon} $ and $ x = \phi(\theta) = \sum_{i = 1}^{K} b_{i} \delta_{s_{i}} $ with $ \theta = (b_{1}, \dots, b_{K}, s_{1}, \dots, s_{K}) $ such that for all $ i \in \{ 1, \dots, K \} $, $ \| t_{i} - s_{i} \|_{2} < \frac{\epsilon}{3} $. Let $ A $ be a linear operator from the space of finite measures over $ \mathbb{R}^{d} $ to $ \mathbb{C}^{m} $. Suppose that for two $ \frac{\epsilon}{3} $-separated $ \frac{\epsilon}{3} $ dipoles $ \nu_{1}, \nu_{2} $, there exists $ \mu_{A} > 0 $ such that $ A $ follows the assumption,
		\begin{equation}\label{ass:mutual_coherence_A}
			\left\langle A \nu_{1}, A \nu_{2}  \right\rangle \leq \mu_{A} \left\| A \nu_{1} \right\|_{2}^{2} \left\| A \nu_{2} \right\|_{2}^{2}.
		\end{equation}
		Then we obtain
		\begin{equation}\label{eq:lem_res}
			\left\| y - A x \right\|_{2}^{2} \leq \sum_{i = 1}^{K} \left\| A \nu_{i} \right\|_{2}^{2} + \mu_{A} \sum_{\substack{i, j = 1 \\ i \neq j}}^{K} \left\| A \nu_{i} \right\|_{2}^{2} \left\| A \nu_{j} \right\|_{2}^{2}.
		\end{equation}
		Moreover, let $ \partial_{i, r} g(\theta) $ be the partial derivative of $ g(\theta) $ with respect to the $ r $-th coordinate of $ s_{i} $.
		Then we get
		\begin{multline}\label{eq:lem_res_grad}
			\left| \partial_{i, r} \| y - A x \|_{2}^{2} - \partial_{i, r} \| A \nu_{i} \|_{2}^{2} \right| \leq \\
			2 a_{i} (K - 1) \mu_{A} \| \partial_{i, r} A \nu_{i} \|_{2}^{2} \max_{j \in \{1, \dots, K\}} \| A \nu_{j} \|_{2}^{2}.
		\end{multline}
	\end{lemma}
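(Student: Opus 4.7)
The plan is to reduce both bounds to a single expansion of $\|y-Ax\|_2^2$ in terms of the dipoles $\nu_i$, followed by a termwise use of the coherence assumption \eqref{ass:mutual_coherence_A}. Since $y=Ax_0$ and $x_0-x=\sum_{i=1}^K\nu_i$, I would first write $y-Ax=\sum_i A\nu_i$ and expand
\[
\|y-Ax\|_2^2=\sum_{i=1}^K\|A\nu_i\|_2^2+\sum_{\substack{i,j=1\\ i\neq j}}^K\langle A\nu_i,A\nu_j\rangle.
\]
To apply \eqref{ass:mutual_coherence_A} to the cross terms, one has to check that for $i\neq j$ the pairs $\nu_i,\nu_j$ are $\epsilon/3$-separated $\epsilon/3$-dipoles: the dipole condition is immediate from $\|t_i-s_i\|_2<\epsilon/3$, and the four separation conditions follow from the triangle inequality combined with $\|t_i-t_j\|_2>\epsilon$ and $\|t_k-s_k\|_2<\epsilon/3$ (for instance $\|s_i-s_j\|_2>\epsilon-2\epsilon/3=\epsilon/3$). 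The first inequality \eqref{eq:lem_res} is then obtained by applying \eqref{ass:mutual_coherence_A} to each cross term.

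For the gradient bound, I would differentiate the same expansion with respect to the $r$-th coordinate of $s_i$. Because $\nu_j$ does not depend on $s_i$ when $j\neq i$, every derivative of $\|A\nu_j\|_2^2$ and of $\langle A\nu_{i'},A\nu_{j'}\rangle$ with $i'\neq i$ and $j'\neq i$ vanishes, so the off-diagonal contribution reduces to
\[
\partial_{i,r}\|y-Ax\|_2^2-\partial_{i,r}\|A\nu_i\|_2^2=2\sum_{j\neq i}\operatorname{Re}\langle\partial_{i,r}A\nu_i,A\nu_j\rangle.
\]
It then suffices to bound each cross term by $a_i\mu_A\|\partial_{i,r}A\nu_i\|_2^2\|A\nu_j\|_2^2$; summing the $K-1$ resulting terms and upper-bounding each $\|A\nu_j\|_2^2$ by $\max_k\|A\nu_k\|_2^2$ yields \eqref{eq:lem_res_grad}.

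The main obstacle is that $\partial_{i,r}A\nu_i=-b_i\,\partial_r A\delta_{s_i}$ is not itself of the form $A\eta$ for a dipole $\eta$, so \eqref{ass:mutual_coherence_A} does not apply directly. My plan is a limiting argument: the rescaled finite difference $\eta^h:=h^{-1}(\delta_{s_i+he_r}-\delta_{s_i})$ is an $\epsilon/3$-dipole as soon as $h\leq\epsilon/3$, and it is $\epsilon/3$-separated from each $\nu_j$ with $j\neq i$ for $h$ small enough (the separation estimates from the first part have strict slack). Applying \eqref{ass:mutual_coherence_A} to $(\eta^h,\nu_j)$ and letting $h\to 0$ passes the inequality through the derivative, producing a bound on $|\langle\partial_r A\delta_{s_i},A\nu_j\rangle|$; multiplying by $|b_i|$ then restores the desired cross term. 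The delicate point I would scrutinize is the amplitude bookkeeping: a naive execution of this argument produces $|b_i|\|\partial_r A\delta_{s_i}\|_2^2=\|\partial_{i,r}A\nu_i\|_2^2/|b_i|$, so to recover the $a_i$ factor of \eqref{eq:lem_res_grad} one must invoke the well-initialization assumption $b_i\approx a_i$ together with the normalization chosen for $\mu_A$.
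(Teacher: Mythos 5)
Your proof follows essentially the same route as the paper's: the same expansion of $\|y-Ax\|_2^2$ into diagonal and cross terms with \eqref{ass:mutual_coherence_A} applied to the cross terms for \eqref{eq:lem_res}, and for \eqref{eq:lem_res_grad} the same isolation of the $j=i$ term and application of the coherence bound to the pair (Dirac derivative, $\nu_j$) — your finite-difference limit $\eta^h=h^{-1}(\delta_{s_i+he_r}-\delta_{s_i})$ is exactly the ``generalized dipole'' device the paper invokes by citing \cite{traonmilin:hal-01938239}. The amplitude bookkeeping you flag at the end is a real loose end, but it is present in the paper's own proof as well (which writes the prefactor as $a_i$ and silently replaces $\|\partial_{i,r}A\delta_{s_i}\|_2^2$ by $\|\partial_{i,r}A\nu_i\|_2^2$ in the final bound), so your version is, if anything, more careful on this point.
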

	\begin{proof}
		Since we suppose that for all $ i \in \{ 1, \dots, K \} $, $ \| t_{i} - s_{i} \|_{2} < \frac{\epsilon}{3} $, we can write $ x_{0} - x = \sum_{i = 1}^{K} \left( a_{i} \delta_{t_{i}} - b_{i} \delta_{s_{i}} \right) $ as a sum of $ \frac{\epsilon}{3} $-dipoles $ \sum_{i = 1}^{K} \nu_{i} $. To get \eqref{eq:lem_res}, we first start by expanding $ \| y - Ax \|_{2}^{2} $,
		\vspace{-10pt}
		\begin{align}
			\left\| y - A x \right\|_{2}^{2} & = \left\| A (x_{0} - x) \right\|_{2}^{2} = \left\| A \sum_{i = 1}^{K} \nu_{i} \right\|_{2}^{2} \\
			& = \sum_{i = 1}^{K} \left\| A \nu_{i} \right\|_{2}^{2} + \sum_{\substack{i, j = 1 \\ i \neq j}}^{K} \left\langle A \nu_{i}, A \nu_{j} \right\rangle
		\end{align}
		We use our assumption \eqref{ass:mutual_coherence_A} so we can bound $ \langle A \nu_{i}, A \nu_{j} \rangle $,
		\vspace{-5pt}
		\begin{equation}
			\left\| y - A x \right\|_{2}^{2} \leq \sum_{i = 1}^{K} \left\| A \nu_{i} \right\|_{2}^{2} + \mu_{A} \sum_{\substack{i, j = 1 \\ i \neq j}}^{K} \left\| A \nu_{i} \right\|_{2}^{2} \left\| A \nu_{j} \right\|_{2}^{2}.
		\end{equation}
		Now, we prove~\eqref{eq:lem_res_grad}. Using Proposition $2.1$ in \cite{traonmilin:hal-01938239}, we have
		\vspace{-5pt}
		\begin{align}
			\partial_{i, r} \| y - A x \|_{2}^{2} = \ & 2 a_{i} \mathcal{R}e \left\langle A \partial_{i, r} \delta_{s_{i}}, A \sum_{j = 1}^{K} \nu_{j} \right\rangle \\
			= \ & 2 a_{i} \mathcal{R}e \left\langle A \partial_{i, r} \delta_{s_{i}}, A \nu_{i} \right\rangle \nonumber \\
			& + 2 a_{i} \mathcal{R}e \left\langle A \partial_{i, r} \delta_{s_{i}}, A \sum_{\substack{j = 1 \\ j \neq i}}^{K} \nu_{j} \right\rangle.
		\end{align}
		Using our assumption \eqref{ass:mutual_coherence_A}, we get (using the fact that this identity is also verified for Dirac derivatives  that are generalized dipoles~\cite{traonmilin:hal-01938239}):
		\begin{align}
			\partial_{i, r} \| y - A x \|_{2}^{2} \leq \ & 2 a_{i} \mathcal{R}e \left\langle A \partial_{i, r} \delta_{s_{i}}, A \nu_{i} \right\rangle \nonumber \\
			& + 2 a_{i} \mu_{A} \| \partial_{i, r} A \delta_{s_{i}} \|_{2}^{2} \sum_{\substack{j = 1 \\ j \neq i}}^{K}  \| A \nu_{j} \|_{2}^{2}.
		\end{align}
		We bound each $ \| A \nu_{j} \|_{2}^{2} $ by their maximum $ \max_{j \in \{1, \dots, K\}} \| A \nu_{j} \|_{2}^{2} $. Moreover, we have that $ 2 a_{i} \mathcal{R}e \left\langle A \partial_{i, r} \delta_{s_{i}}, A \nu_{i} \right\rangle = \partial_{i, r} \| A \nu_{i} \|_{2}^{2} $. Finally, we obtain
		\begin{multline}
			\left| \partial_{i, r} \| y - A x \|_{2}^{2} - \partial_{i, r} \| A \nu_{i} \|_{2}^{2} \right| \leq \\
			2 a_{i} \mu_{A} (K - 1) \| \partial_{i, r} A \nu_{i} \|_{2}^{2} \max_{j \in \{1, \dots, K\}} \| A \nu_{j} \|_{2}^{2}.
		\end{multline}
	\end{proof}
	Using the result \eqref{eq:lem_res} from Lemma \ref{lem:bounds_y-Ax}, we get that in order to minimize $\| y - A x \|_{2}^{2} $, a sufficient condition is to minimize individually each $\| A \nu_{i} \|_{2}^{2} $, i.e. recover $K$ distinct $1$-spike signals.

	To distinguish converged spikes from non-converged spikes, we will compare their gradients using the result \eqref{eq:lem_res_grad} from Lemma \ref{lem:bounds_y-Ax}. Indeed, the absolute difference between $ \partial_{i, r} \| y - A x \|_{2}^{2} $ and $ \partial_{i, r} \| A \nu_{i} \|_{2}^{2} $ is small: For common observation operators (e.g. Gaussian filter), we have that the mutual coherence parameter $ \mu_{A} $ is small and the terms $ \| \partial_{i, r} A \nu_{i} \|_{2}^{2} $ and $ \sup_{j \in \{1, \dots, K\}} \| A \nu_{j} \|_{2}^{2} $ are bounded. In practice, the over-parametrization of the initialized signal yields more spikes than dipoles, however the simpler case given by Lemma \ref{lem:bounds_y-Ax} suggests that blocks of spikes with large gradients should be considered for the selection strategy.

	\subsection{Description of the algorithm}

	Our new algorithm: the Projected Block Coordinate  Descent (Algorithm \ref{alg:BCD}) is an improvement of the Projected Gradient Descent \cite{traonmilin:projected_gradient_descent}, which performs a descent on all parameters while enforcing a separation constraint on the positions of the estimated spikes.
	We keep this main feature and we integrate our Block Coordinate Descent acceleration.  Our method is  derived from the Gauss-Southwell rule. We choose our blocks from the set of all blocks $ \mathcal{B}_{\mathtt{super-res}} $, where each block represents a spike. In each iteration of the main loop, we compute the norm of the gradients ($\| \nabla [ g(\theta) ]_{i} \|_{2}$ for all $i \in \{1, \dots, K\}$) associated to each spike of the estimated signal with $ \nabla [ g(\theta) ]_{i} := \nabla g((a_{i}, t_{i})) $ i.e. the gradients for the amplitude and position of the spike $ i $. Then, we create two sets $I$ and $J$. The set $I = \left\{i, \| \nabla [ g(\theta) ]_{i} \|_{2} \geq \texttt{threshold} \times \max_{j} \| \nabla [ g(\theta) ]_{j} \|_{2} \right\}$ represents all spikes that have not yet converged. In practice, we set manually the value of \texttt{threshold} and we discuss this choice in more details in the next section. We set $J = \{1, \dots, K\} \backslash I$. We note $x_{I} = \sum_{i \in I} a_{i} \delta_{s_{i}}$ and $x_{J} = \sum_{j \in J} a_{j} \delta_{s_{j}}$.
	With only the selected spikes from $I$, we perform a projected gradient descent on $x_{I}$ for a fixed number of iteration $ \mathtt{m_{it}} $ (or until convergence). Note that the observation in this partial projected descent $y_{I}$ is $ y - A x_{J} $ i.e. the original observation updated with the non-selected spikes from this iteration.
	If a projection occurs during this descent, we stop the projected gradient descent. We merge both signals $x_{I}$ and $x_{J}$ and we compute again $I$ and $J$.
	We continue this process until convergence of our estimated signal $x$.
	To perform the descent, we choose the FISTA Restart algorithm \cite{o2015adaptive} as in practice it outperforms a classic Gradient Descent and it provides fast convergence.
	\begin{algorithm}[]
		\caption{Projected Block Coordinate  Descent.}
		\label{alg:BCD}
		\begin{algorithmic}
			\Procedure{BCD}{$ y, A, \theta^{(0)}, \mathtt{n_{it}}, \mathtt{m_{it}} \mathtt{threshold} $}

			\State $ n \gets 0 $
			\State $ k \gets \#\mathtt{spikes} $

			\While{$ n < n_{it} $}
				\State $ I = \{i, \left\| \nabla [ g(\theta^{(n)}) ]_{i} \right\|_{2} \geq $ \\
				\hspace*{\fill} $ \texttt{threshold} \times \max_{j} \left\| \nabla [ g(\theta^{(n)}) ]_{j} \right\|_{2} \} $

				\State $ J = \{1, \dots, k\} \ \backslash \ I $

				\State $ \tilde{\theta}^{(n)} \gets \left\{ \theta_{i}^{(n)}, i \in I \right\} $, $\quad \bar{\theta}^{(n)} \gets \left\{ \theta_{j}^{(n)}, j \in J \right\} $

				\State $\tilde{y} \gets y - A \phi(\bar{\theta}^{(n)}) $
				\vspace{3pt}
				\State $ \tilde{\theta}^{(n^{*})} \gets \mathtt{projected\_descent}(\tilde{y}, \tilde{\theta}^{(n)}, \mathtt{m_{it}}) $

				\State $ \theta^{(n^{*})} \gets \left\{ \tilde{\theta}_{i}^{(n^{*})} \text{ if } i \in I \text{ else } \bar{\theta}_{i}^{(n)} \right\}_{i = 1, \dots, k} $

				\State $ \theta^{(n)} \gets \theta^{(n^{*})} $
				\State $ k \gets \#\mathtt{spikes} $

			\EndWhile

			\State \textbf{return} $ \theta^{(n_\mathtt{it})} $

			\EndProcedure
		\end{algorithmic}
	\end{algorithm}

	\section{Experiments}

	In this section, we compare our new Projected Block Coordinate  Descent with the state-of-the-art Projected Gradient Descent. The signal to recover in every experiment is composed of $50$ Dirac measures for MA-TIRF in $3$ dimensions \cite{denoyelle:hal-01921604:sliding-frank-wolfe}. As these methods aim to recover a signal from an initial estimation, we use Over-Parametrized Continuous Orthogonal Matching Pursuit (OP-COMP) for the initialization of the signal.

	The linear operator used for the observations is the MA-TIRF operator \cite{denoyelle:hal-01921604:sliding-frank-wolfe}. It is used in microscopy for recovering cells in $3$D images. The positions $t$ are in a domain of size $6.4$mm$\times 6.4$mm$\times 0.8$mm. The observation is composed of $4$ planes which represent different focal angles. Each plane is composed of $64 \times 64$ pixel. In total, $\dim(y) = 64 \times 64 \times 4 = 16\,384$. The amplitudes follow a uniform distribution $U([1, 2])$.
	The code for these experiments is available for download at \cite{benard2024code_bcd}.

	\begin{figure}[htbp]
		\centering
		\subfigure[]{
			\includegraphics[width=0.51\columnwidth]{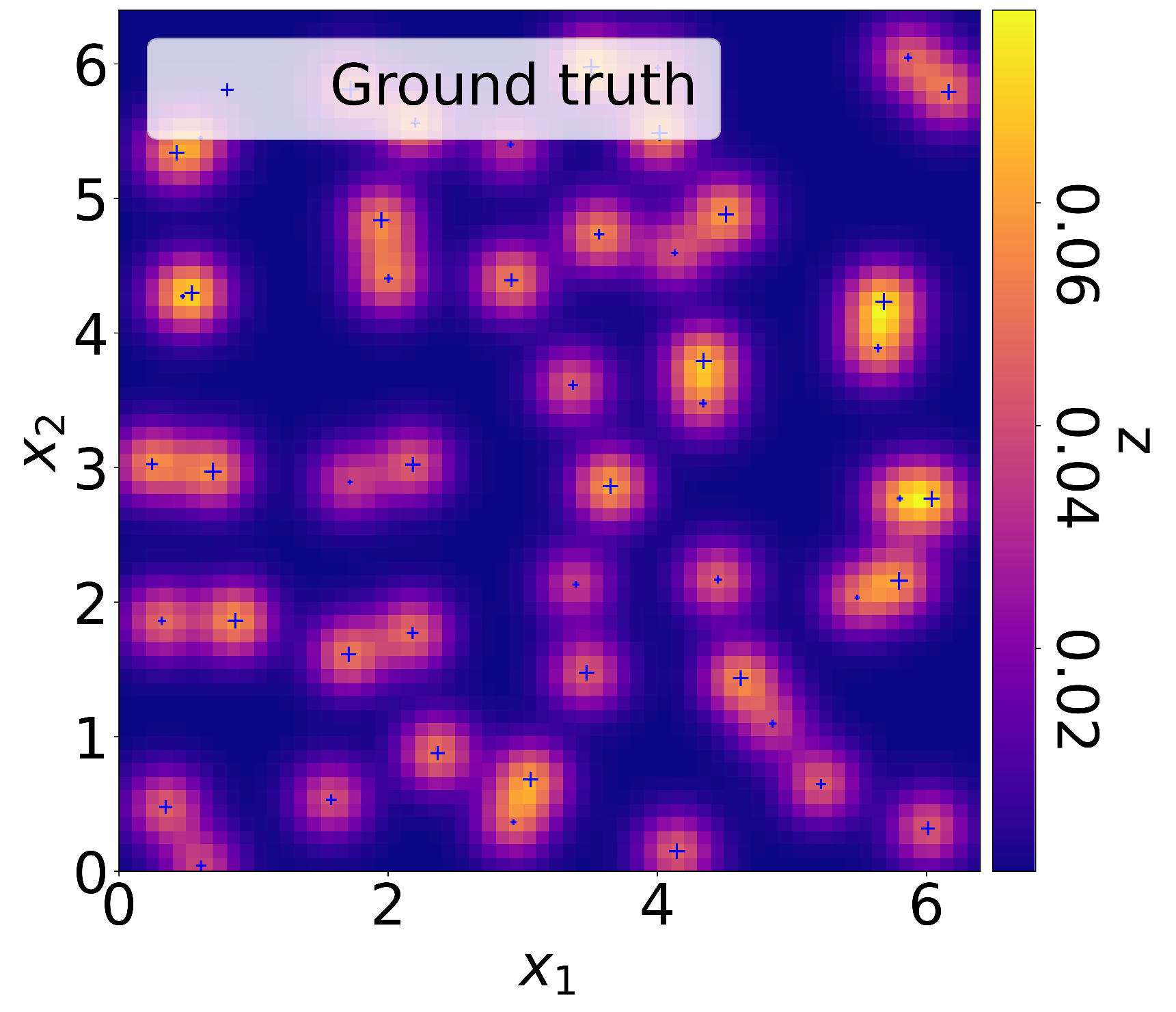}
			\label{fig:MATIRF_y0_xtrue}
		}
		\hspace{-10pt}
		\subfigure[]{
			\includegraphics[width=0.43\columnwidth]{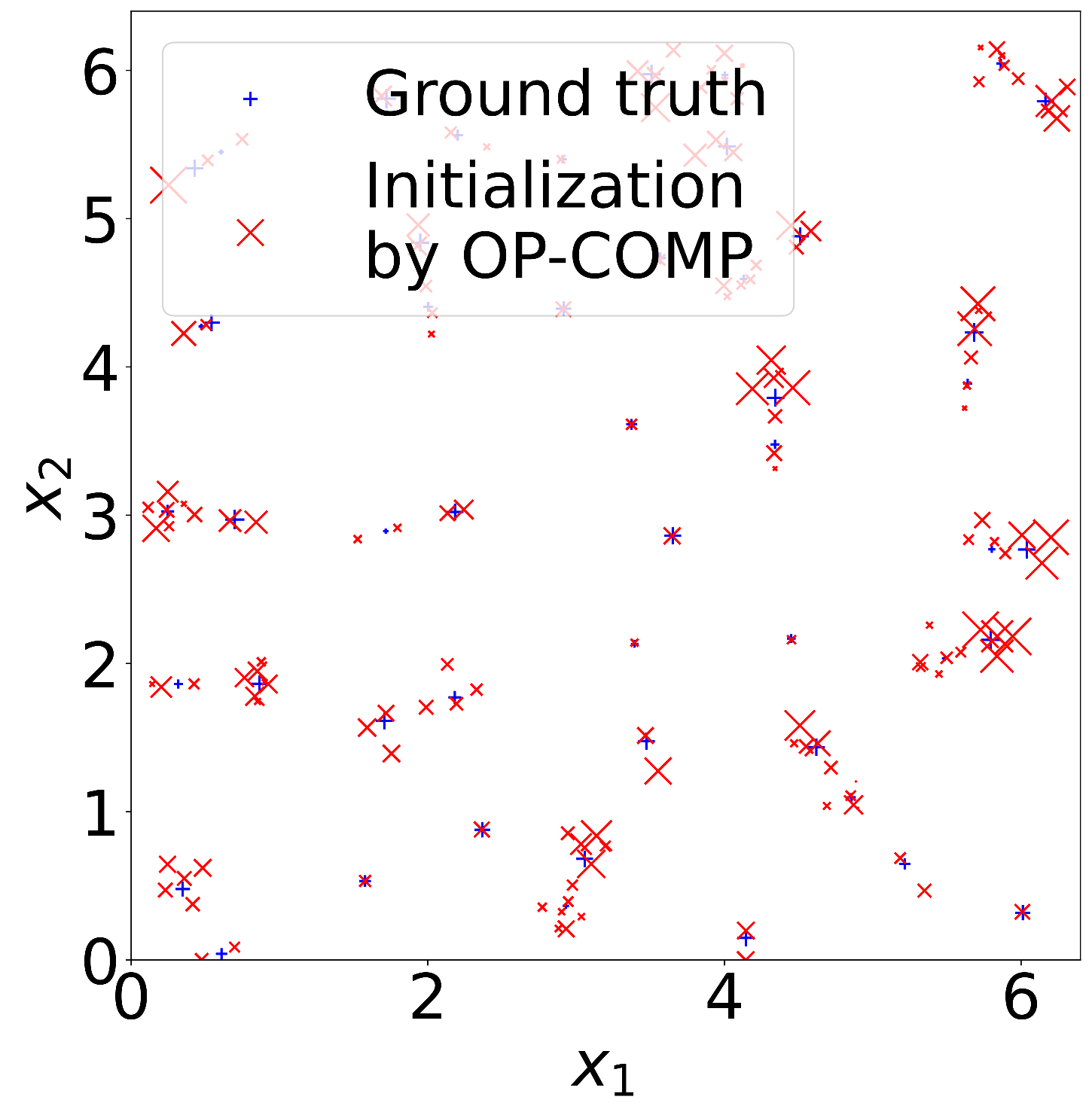}
			\label{fig:MATIRF_xinit}
		}
		\vspace{-.5\baselineskip}
		\caption{
			(a) Ground truth and a plane of its observation through MA-TIRF,
			(b) Ground truth and the initialized signal by OP-COMP.}
		\label{fig:MATIRF_x0}
	\end{figure}

	In Fig. \ref{fig:MATIRF_y0_xtrue}, we display a focal plane of the observation. It corresponds to an image taken at a certain angle. Though it is in $2$ dimensions, the combination of multiple angles is useful to recover spikes in $3$D. We see in this figure that most of the spikes are well separated.
	In Fig. \ref{fig:MATIRF_xinit}, we have the true signal and the initialized signal by OP-COMP. Note that every spike from the ground truth has at least one initialized spike close to itself. Moreover, the over-parametrization is clearly visible as the number of initialized spikes is larger than the number of spikes to recover.

	\begin{figure}[htbp]
		\centering
		\subfigure[]{
			\includegraphics[width=0.46\columnwidth]{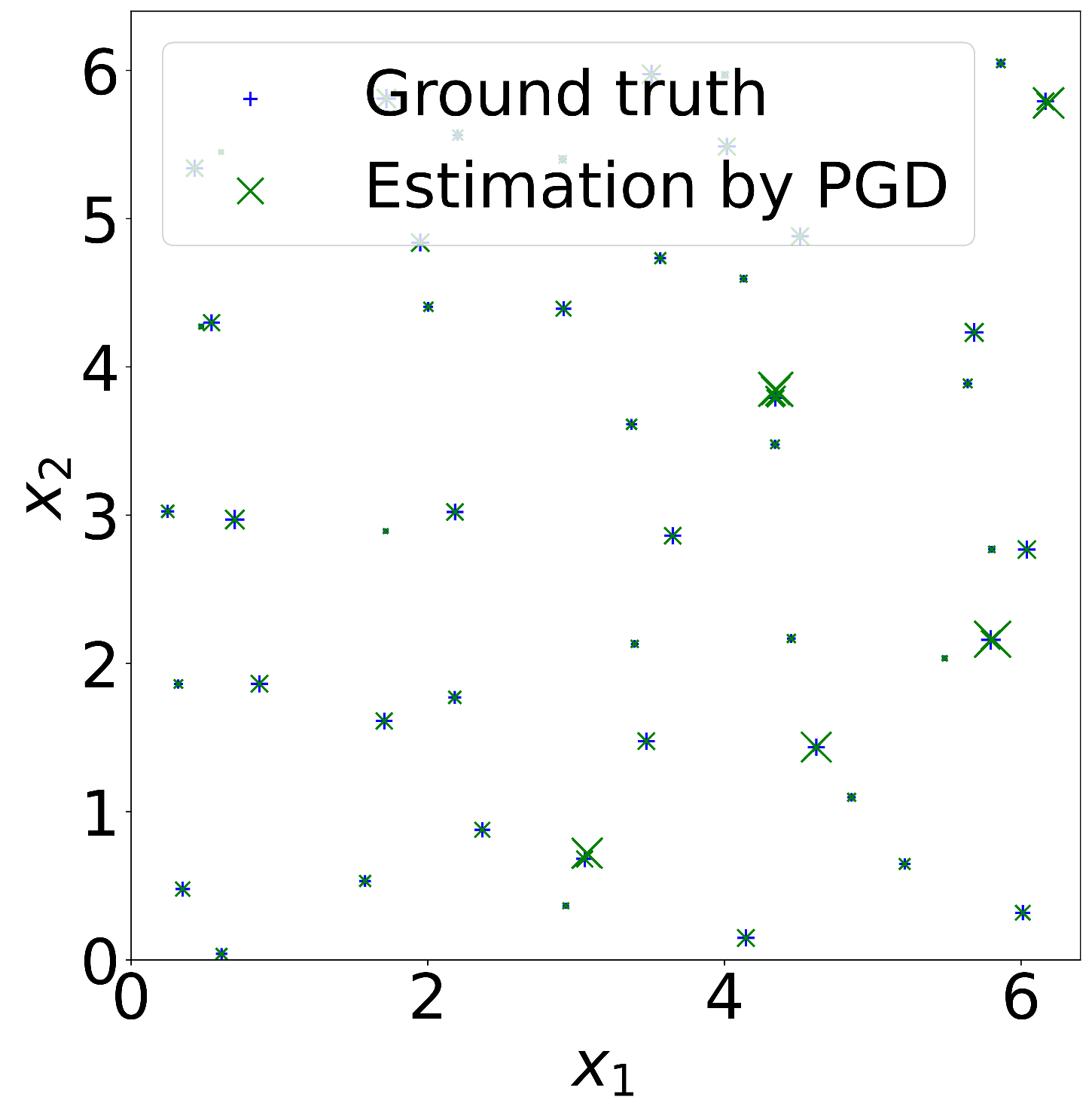}
			\label{fig:MATIRF_xesti_PGD}
		}
		\subfigure[]{
		\includegraphics[width=0.46\columnwidth]{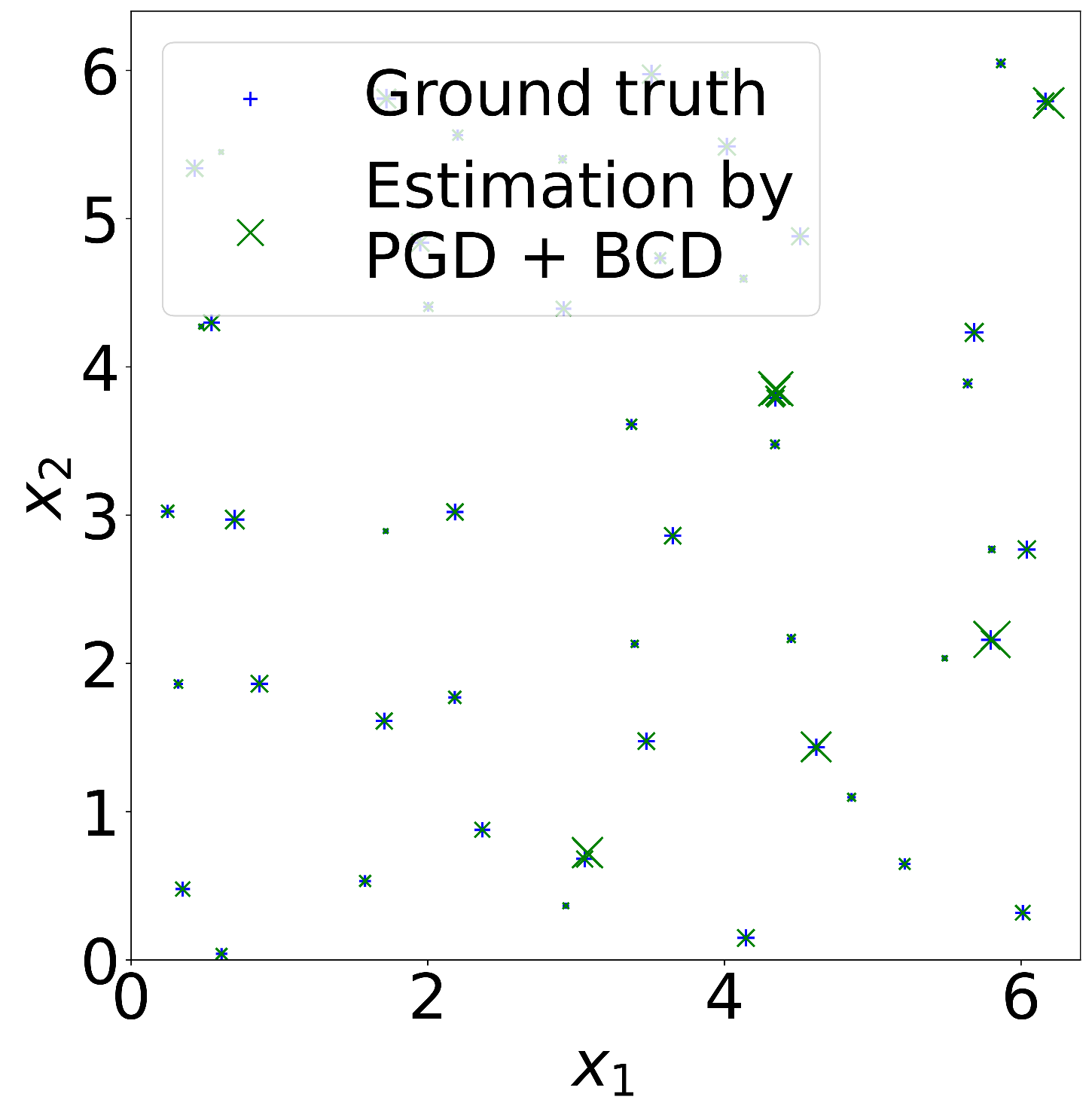}
		\label{fig:MATIRF_xesti_BCD}
		}
		\subfigure[]{
		\includegraphics[width=\columnwidth]{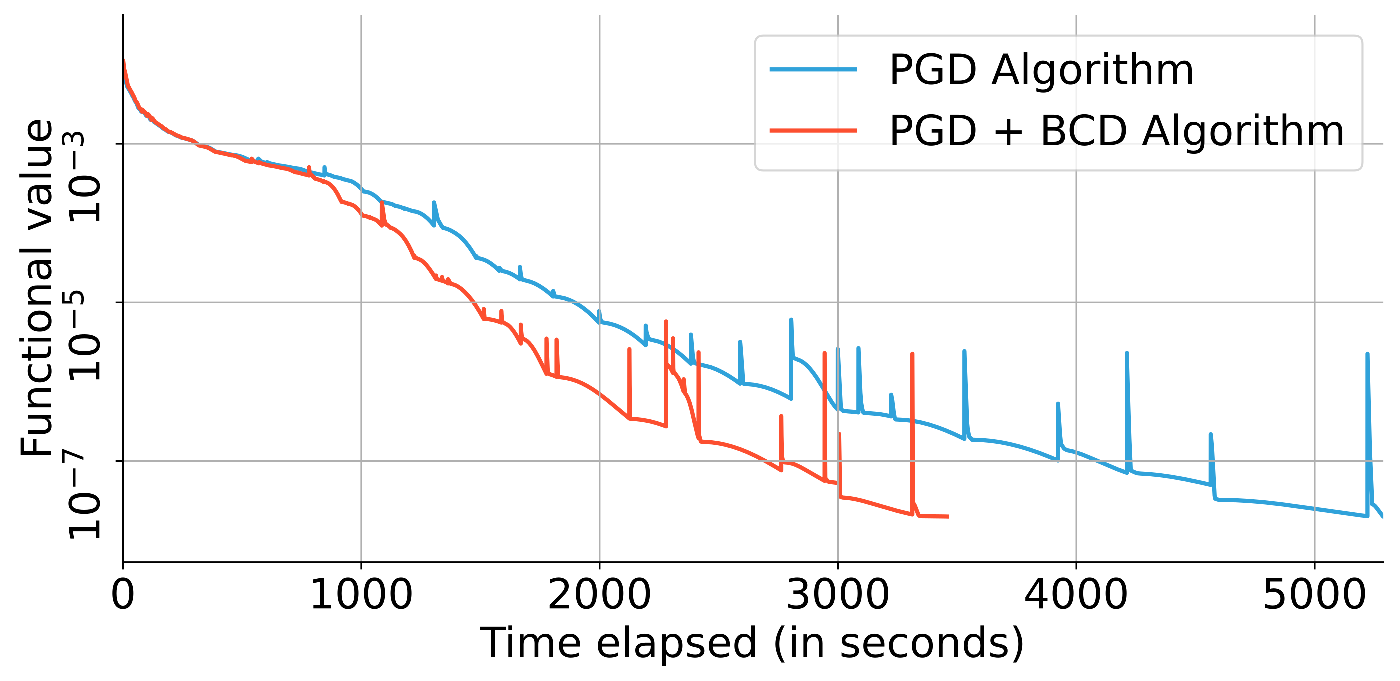}
		\label{fig:MATIRF_error_both_time}
		}
		\vspace{-.5\baselineskip}
		\caption{
			(a) Ground truth and the estimated signal by PGD with MA-TIRF,
			(b) Ground truth and the estimated signal by BCD with MA-TIRF,
			(c) Comparison norms of residues by PGD and BCD in function of time.}
		\label{fig:MATIRF_both}
	\end{figure}

	In Fig. \ref{fig:MATIRF_xesti_PGD}, we observe that the spikes are well recovered by PGD as expected.
	In Fig. \ref{fig:MATIRF_error_both_time} (in blue), we show the norm of the residue $\| y - Ax \|_{2}^{2}$ through the iterations for both PGD (in blue) and BCD (in red). Note that we stop the descent when the norm of the residue gets below $ 2 \times 10^{-8} $. For PGD, the norm of the residue decreases and reaches its minimum at $2 \times 10^{-8}$. The sudden jumps in the norm of the residue are due to the  projection used.

	In Fig. \ref{fig:MATIRF_xesti_BCD}, we obtain an estimated signal by BCD similar to the one obtained with PGD.
	In Fig. \ref{fig:MATIRF_error_both_time} (in red), as for PGD, we observe the norm of the residue through the iterations of BCD. We note that for the same norm of the residue, BCD converges much faster with an acceleration of almost $ 35\% $ in computation time.

	\begin{figure}[htbp]
	\centering
	\subfigure[]{
		\includegraphics[width=.45\columnwidth]{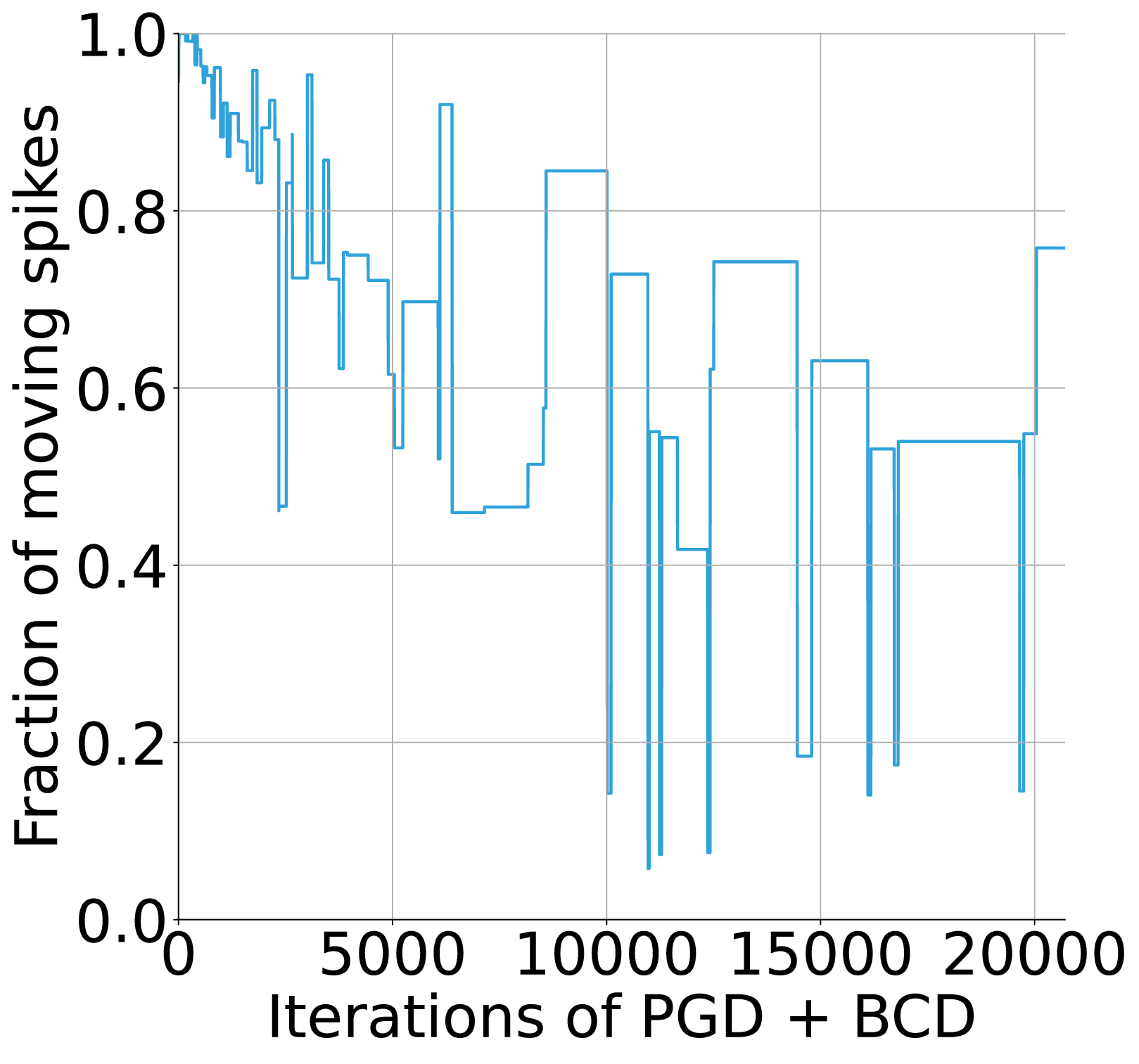}
		\label{fig:MATIRF_fraction_BCD}
	}
	\subfigure[]{
		\includegraphics[width=.48\columnwidth]{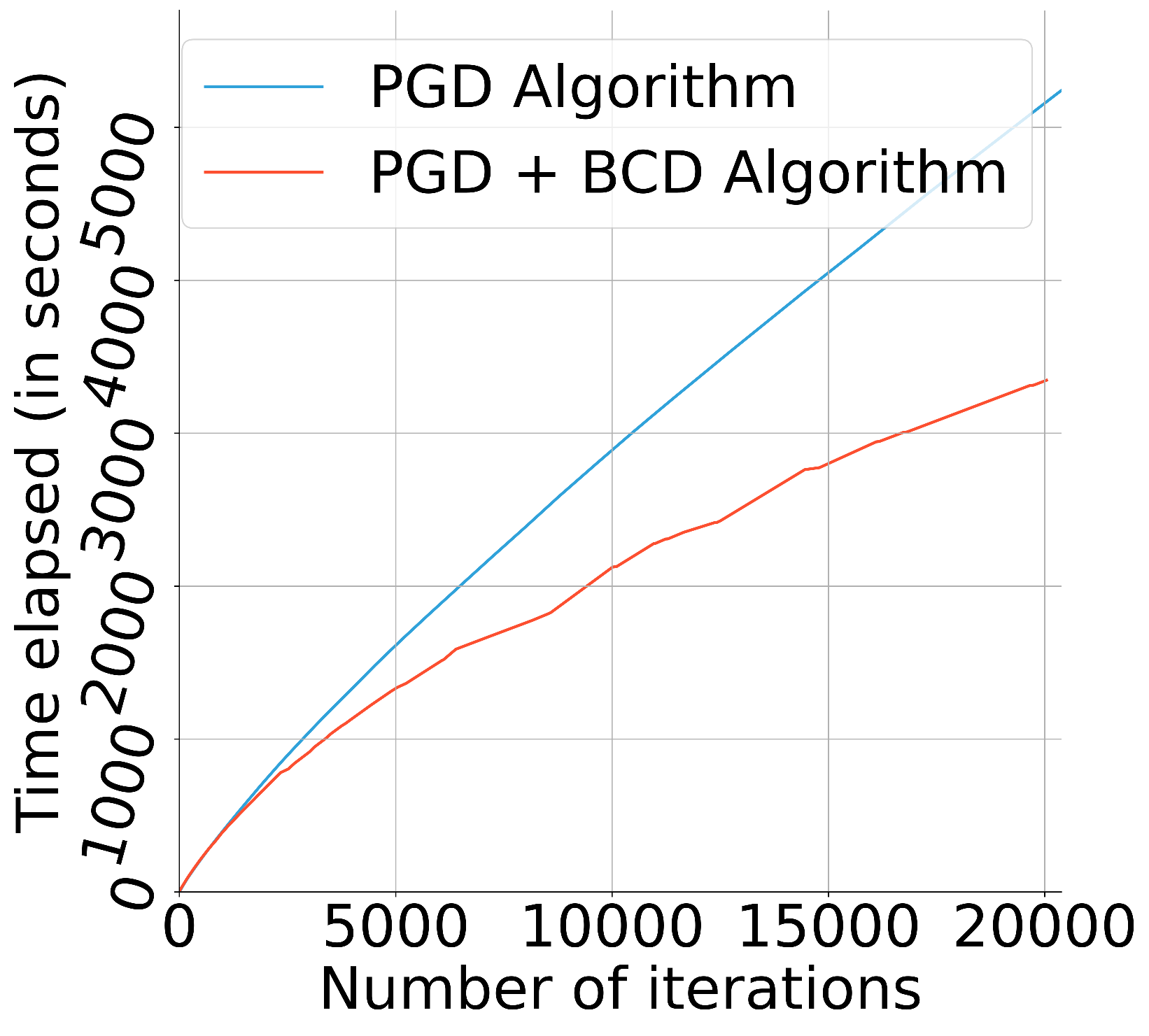}
		\label{fig:MATIRF_time_both}
	}
	\vspace{-.5\baselineskip}
	\caption{
		(a) Fraction of spikes whose gradients are computed during BCD,
		(b) Time elapsed for PGD and BCD in function of the number of iterations}
	\label{fig:MATIRF_misc}
	\end{figure}

	While the start of both methods takes the same time for about $ 2000 $ iterations, we observe a clear acceleration as the time passes. Indeed, as seen in Figure \ref{fig:MATIRF_fraction_BCD}, the fraction of gradients computed at each iteration of BCD decreases. A good empirical value for the choice of \texttt{threshold} in Algorithm \ref{alg:BCD} is $ 10^{-3} $. This value is used for all of our experiments. It provides a good trade-off in terms of minimization of $ g $
	(see Equation \eqref{eq_g})
	and computation time. Indeed, this threshold limits the number of blocks used for each \texttt{projected\_descent} by selecting only non-converged spikes. At the start of the descent, almost all gradients are still computed at each iteration, but as the iterations passes, the fraction of computed gradients decreases, hence the computation time gained over PGD. Both Figures \ref{fig:MATIRF_fraction_BCD} and \ref{fig:MATIRF_time_both} reveal that by computing fewer gradients, we can gain time on the total computation.

	After being tested on $10$ similar signals composed of $ 50 $ spikes with a high separation between spikes, the results stay the same and we have that in general, BCD is $35\%$ faster than PGD for MA-TIRF. Other metrics used to compare both estimations are the Jaccard, Precision and Recall. For these metrics, there is no particular difference between the two methods PGD and BCD.

	We note that for other kind of linear operators (such as the Fourier Transform), performances of the BCD can vary, but it always improves the computation speed.

	\section{Discussion/Conclusion}

	To conclude, we observe that the block structure for the sparse spike resolution permits to improve computation time of descent algorithms. Indeed, in a realistic microscopy setting, the time gained by BCD is up to $ 35\% $ when compared to previous methods. For future works, we intend to try other greedy methods for a better block selection. Moreover, one can parallelize the descent on the different blocks as it is a major benefit of Block Coordinate Descent methods.

	\section{Acknowledgments}

	This work was supported by the French National Research Agency (ANR) under reference ANR-20-CE40-0001 (EFFIREG project). Experiments presented in this paper were carried out using the PlaFRIM experimental testbed, supported by Inria, CNRS (LABRI and IMB), Université de Bordeaux, Bordeaux INP and Conseil Régional d’Aquitaine (see \url{https://www.plafrim.fr}).

	\bibliographystyle{abbrv}
	\bibliography{bibliography.bib}

\end{document}